\nonstopmode \numberwithin{equation}{section}
\newtheorem{theorem}{Theorem}
\newtheorem{corollary}{Corollary}[section]
\begin{document}
\title[On fractional kinetic equations and their Sumudu transform...]{On fractional kinetic equations and their Sumudu transform multiparameter Struve functions based solutions}
\author{K.S. Nisar*, F.B.M. Belgacem, M. S. Abouzaid}
\address{K. S. Nisar : Department of Mathematics, College of Arts and
Science, Prince Sattam bin Abdulaziz University, Wadi Aldawaser, Riyadh
region 11991, Saudi Arabia}
\email{ksnisar1@gmail.com }
\address{F.B.M. Belgacem : Department of Mathematics,  Faculty of Basic Education, PAAET, Al-Ardhiya, Kuwait}
\email{fbmbelgacem@gmail.com}

\address{M. S. Abouzaid : Department of Mathematics, Faculty of Science, Kafrelshiekh University, Egypt. }
\email{moheb\_abouzaid@hotmail.com}
\keywords{Fractional kinetic equations, Sumudu transforms, generalized
Struve function, Fractional calculus .}
\subjclass[2010]{26A33, 44A10, 44A20, 33E12}
\thanks{*corresponding author}

\begin{abstract}
This research paper treats fractional kinetic equations using the Sumudu transform operator. The exact solutions obtained are presented in terms of Struve functions of four parameters.  By way of obtaining solutions some novel and useful and novel kinetic theorems are presented in light of the Sumudu properties.  Results obtained in this study may be pragmatically used in many branches theoretical and experimental science applications, not the least of which are mathematical physics, and various engineering fields.
\end{abstract}

\maketitle

\section{Introduction}

The Struve function $H_{v}\left( z\right) $  and modified Struve function $L_{v}\left( z\right) $ 
are are given by the following infinite series, respectively, 
\begin{equation}
H_{\upsilon }\left( z\right) =\left( \frac{z}{2}\right) ^{\upsilon
+1}\sum\limits_{k=0}^{\infty }\frac{\left( -1\right) ^{k}}{\Gamma \left( k+%
\frac{3}{2}\right) \Gamma \left( k+\upsilon +\frac{3}{2}\right) }\left( 
\frac{z}{2}\right) ^{2k},  \label{eqn-1-Struve}
\end{equation}%
and 
\begin{equation}
L_{\upsilon }\left( z\right) =\left( \frac{z}{2}\right) ^{\upsilon
+1}\sum\limits_{k=0}^{\infty }\frac{1}{\Gamma \left( k+\frac{3}{2}\right)
\Gamma \left( k+\upsilon +\frac{3}{2}\right) }\left( \frac{z}{2}\right) ^{2k}.
\label{eqn-2-Struve}
\end{equation}%
The Struve function of order $\upsilon $
(see \cite{Struve}) is the solution of the non-homogeneous Bessel differential equation: 
\begin{equation}
x^{2}y^{\prime \prime }(x)+xy^{^{\prime }}\left( x\right) +\left(
x^{2}-\upsilon^{2}\right) y\left( x\right) =\frac{4(x/2)^{\upsilon +1}}{\sqrt{\pi }%
\Gamma \left( {\upsilon +1/2}\right) },\quad \upsilon \in \mathbb{C}.
\label{StruveH}
\end{equation}%
where $\Gamma\left(z\right)$ is the gamma function and, $L_{\upsilon }\left( z\right)$ is
related to $H_{\upsilon }\left( z\right)$, by the relation
\begin{align}
L_{n}\left( z\right)=-ie^{-n\pi i/2}H_{n}\left(i z\right).
\end{align}
While, the homogeneous solutions of \eqref{StruveH} are the Bessel functions, the particular solutions may be given to  correspond to Struve functions.  For complex parameter, $\upsilon$,  modified Struve functions, $L_{\upsilon}$,  turn out to be solutions of a modified version of equation \eqref{StruveH},  where the LHS zeroth coefficient is replaced by: $-(x^2 + v^2)$.  Applications of Struve functions can be found in various branches of applied science (see \cite{Ahmedi, Hirata,Shaw,Shao, Pedersen}).

Generalized versions of Struve function are done by extending its domain or expanding the type and number of parameters.  
In particular, a four- parameter  generalized struve function, studied by Singh \cite{Singh1}, is defined by: 
\begin{equation}
H_{p,\mu }^{\lambda ,\alpha }\left( x\right) :=\sum_{k=0}^{\infty }\frac{%
\left( -1\right) ^{k}}{\Gamma \left( \alpha k+\mu \right) \Gamma \left(
\lambda k+p+\frac{3}{2}\right) }\left( \tfrac{x}{2}\right)
^{2k+p+1},p\in \mathbb{C}  \label{1}
\end{equation}%
where $\lambda >0,\alpha >0$ and $\mu $ an arbitrary parameter.

For subsequent need towards our objective below we also recall the generalized Mittag-Leffler function \cite{Mittag} defined by, 
\begin{equation}
E_{\alpha ,\beta }\left( x\right) =\sum_{n=0}^{\infty }\frac{x^{n}}{\Gamma
\left( \alpha n+\beta \right) }.  \label{Mittag-eqn}
\end{equation}%
The main motive of this paper is to study the solution of generalized
form of the fractional kinetic equation involving generalized Struve
function of four parameters with the help of Sumudu transform.

The Sumudu transform introduced by Watugala (see \cite{Watugala1, Watugala2}). 
For more details about Sumudu transform, see (\cite{Asiru,
Belgacem2003,Belgacem2005, Belgacem2006a,Belgacem2006b,Belgacem2010,Belgacem2009,Belgacem2017,Belgacem2016, CDB2012, HB}).
The Sumudu transform over the set functions
\begin{equation*}
A=\left\{ f\left( t\right) \left\vert \exists ~M,\tau _{1},\tau
_{2}>0,\left\vert f\left( t\right) \right\vert <Me^{\left\vert t\right\vert
/\tau _{j}}\right. ,~\text{if }t\in \left( -1\right) ^{j}\times \lbrack
0,\infty )\right\}
\end{equation*}

is defined by
\begin{equation}
G\left( u\right) =S\left[ f\left( t\right) ;u\right] =\int_{0}^{\infty
}f\left( ut\right) e^{-t}dt,~u\in \left( -\tau _{1},\tau _{2}\right) .
\label{S1}
\end{equation}

The significance of fractional differential equations in the field of
applied science increased more attention not only in mathematics but also in
physics, dynamical systems, control systems and systems engineering, to
create the mathematical model of numerous physical phenomena 
(\cite{Chaurasia, Chouhan,  Chouhan1, Gupta, Gupta1, Nisar1, Nisar2, Nisar3, 
Nisar4, Saichev, Saxena1, Saxena2, Saxena3, Saxena4, Zaslavsky}. 
To carry our investigation,we would like to recall the following results due to Haubold and Mathai \cite%
{Haubold}. The fractional differential equation between rate of change of
reaction was established by Haubold and Mathai \cite{Haubold}. The
destruction rate and the production rate are given as follows 
\begin{equation}
\frac{dN}{dt}=-d\left( N_{t}\right) +p\left( N_{t}\right)
\label{eqn-6-Struve}
\end{equation}
where $N=N\left( t\right) $ the rate of reaction ,$d=d\left( N\right) $ the
rate of destruction, $p=p\left( N\right) $ the rate of production and $N_{t}$
denote the function defined by $N_{t}\left( t^{\ast }\right)
=N\left(t-t^{\ast }\right) ,t^{\ast }>0$

The special case of ($\ref{eqn-6-Struve}$), for spatial fluctuations or in
homogeneities in $N\left( t\right) $ \ the quantity are neglected, that is
the equation 
\begin{equation}
\frac{dN}{dt}=-c_{i}N_{i}\left( t\right)  \label{eqn-7-Struve}
\end{equation}
with the initial condition that $N_{i}\left( t=0\right) =N_{0}$ is the
number of density of species $i$ at time $t=0$ and $c_{i}>0.$ If we reject
the index $i$ and integrate the standard kinetic equation ($\ref%
{eqn-7-Struve}$), we have 
\begin{equation}
N\left( t\right) -N_{0}=-c_{0}D_{t}^{-1}N\left( t\right)
\label{eqn-8-Struve}
\end{equation}
where $_{0}D_{t}^{-1}$ is the special case of the Riemann-Liouville integral
operator $_{0}D_{t}^{-\upsilon }$ defined as 
\begin{equation}
_{0}D_{t}^{-\upsilon }f\left( t\right) =\frac{1}{\Gamma \left( \upsilon
\right) }\int\limits_{0}^{t}\left( t-s\right) ^{\upsilon -1}f\left( s\right)
ds,t>0,\Re\left( \upsilon \right) >0  \label{eqn-8b-Struve}
\end{equation}%
The fractional generalization of the standard kinetic equation ($\ref%
{eqn-8-Struve}$) given by Haubold and Mathai \cite{Haubold} as: 
\begin{equation}
N\left( t\right) -N_{0}=-c_{0}^{\upsilon }D_{t}^{-1}N\left( t\right) 
\label{eqn-9-Struve}
\end{equation}
and obtained the solution of ($\ref{eqn-9-Struve}$) as follows 
\begin{equation}
N\left( t\right) =N_{0}\sum\limits_{k=0}^{\infty }\frac{\left( -1\right) ^{k}%
}{\Gamma \left( \upsilon k+1\right) }\left( ct\right) ^{\upsilon k}
\label{eqn-10-Struve}
\end{equation}
Further, Saxena and Kalla \cite{Saxena4} considered the following fractional
kinetic equation: 
\begin{equation}
N\left( t\right) -N_{0}f\left( t\right) =-c^{\upsilon }\left(
_{0}D_{t}^{-1}N\right) \left( t\right) \text{ \ \ \ }\left( \Re%
\left( \upsilon \right) >0\right)  \label{eqn-11-Struve}
\end{equation}
where $N(t)$ denotes the number density of a given species at time $t$, $%
N_{0}=N\left( 0\right) $ is the number density of that species at time $t=0$%
, $c$ is a constant and $f\in L(0,\infty $).
By applying the Laplace transform to ($\ref{eqn-11-Struve}$), 
\begin{align}
L\left[ N\left( t\right) \right] \left( p\right)&=N_{0}\frac{F\left(
p\right)}{1+c^{\upsilon }p^{-\upsilon }}\notag\\
&=N_{0}\left( \sum_{n=0}^{\infty}\left( -c^{\upsilon }\right) ^{n}p^{-n\upsilon }\right) F\left( p\right) 
\text{ \ \ }\left( n\in N_{0},\left\vert \frac{c}{p}\right\vert <1\right),
\label{eqn-12-Struve}
\end{align}
where the Laplace transform (\cite{Spiegel}) is defined by 
\begin{equation}
F\left( p\right) =L\left[ f(t)\right] =\int\limits_{0}^{\infty
}e^{-pt}f\left( t\right) dt\text{ \ \ }\Re\left( p\right) >0
\label{eqn-13-Struve}
\end{equation}
The Sumudu transform of  $\left( \ref{eqn-8b-Struve}\right) $ is (see, (\cite%
{Belgacem2003}), p.106,eqn (2.1)) :
\begin{equation}
S\left[ _{0}D_{t}^{-v}f\left( t\right) \right] =u^{v}G\left( u\right) 
\label{S2}
\end{equation}%
where $G\left( u\right) $ is defined in $\left( \ref{S1}\right) .$
It is clear that the Sumudu transform of $f\left( t\right) =t^{\alpha }$ is
given by 
\begin{equation}
S\left[ f\left( t\right) \right] =\int_{0}^{\infty }\left( tu\right)
^{\alpha }e^{-t}dt=u^{\alpha }\Gamma \left( \alpha +1\right) ,\Re%
\left( \alpha \right) >-1  \label{S3}
\end{equation}

\section{Solution of generalized fractional Kinetic equations}

We devote this section to the derivative of the exact solution of time
fractional kinetic equation via the well-known Sumudu transform. 
The results are as follows:
\begin{theorem}
\label{Th1} If $d>0,\upsilon >0,\alpha ,c,l,t\in \mathbb{C}$ ,$\Re%
\left( l\right) >-1$ and $\Re\left( u\right) >0$ with $\left\vert
u\right\vert <d^{-1},$ then the solution of the equation%
\begin{equation}
N\left( t\right) -N_{0}H_{l,\mu }^{\lambda ,\alpha }\left( t\right)
:=-d^{\upsilon }\text{ }_{0}D_{t}^{-\upsilon }N\left( t\right) ,
\label{eqn-14-Struve}
\end{equation}
\end{theorem}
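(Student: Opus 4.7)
The strategy is the standard Sumudu-transform approach to fractional kinetic equations, adapted to the generalized Struve source term. The plan is to apply $S[\cdot;u]$ to both sides of \eqref{eqn-14-Struve}, use linearity plus the fractional-integral identity \eqref{S2}, solve algebraically for $G(u)=S[N(t);u]$, and then invert term by term using \eqref{S3}.

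First I would set $G(u)=S[N(t);u]$ and take the Sumudu transform of \eqref{eqn-14-Struve}. By \eqref{S2} the right-hand side becomes $-d^{\upsilon}u^{\upsilon}G(u)$, so the transformed equation rearranges to
\begin{equation*}
G(u)\,\bigl(1+d^{\upsilon}u^{\upsilon}\bigr)=N_{0}\,S\!\left[H_{l,\mu }^{\lambda ,\alpha }(t);u\right].
\end{equation*}
Next I would compute the Sumudu transform of the source term by substituting the defining series \eqref{1}, interchanging sum and integral (justified by absolute convergence under $\Re(l)>-1$), and invoking \eqref{S3} on each monomial $t^{2k+l+1}$ to get
\begin{equation*}
S\!\left[H_{l,\mu}^{\lambda,\alpha}(t);u\right]=\sum_{k=0}^{\infty}\frac{(-1)^{k}\,\Gamma(2k+l+2)}{\Gamma(\alpha k+\mu)\,\Gamma(\lambda k+l+\tfrac{3}{2})\,2^{2k+l+1}}\,u^{2k+l+1}.
\end{equation*}

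Then I would solve for $G(u)$ by expanding $(1+d^{\upsilon}u^{\upsilon})^{-1}$ as a geometric series, which is legitimate precisely because of the hypothesis $|u|<d^{-1}$ (so $|d^{\upsilon}u^{\upsilon}|<1$). This produces a double series in $u$ of the form
\begin{equation*}
G(u)=N_{0}\sum_{n=0}^{\infty}\sum_{k=0}^{\infty}\frac{(-1)^{n+k}\,d^{n\upsilon}\,\Gamma(2k+l+2)}{\Gamma(\alpha k+\mu)\,\Gamma(\lambda k+l+\tfrac{3}{2})\,2^{2k+l+1}}\,u^{n\upsilon+2k+l+1}.
\end{equation*}
Finally I would invert by applying $S^{-1}$ term by term; since \eqref{S3} gives $S^{-1}[u^{\gamma}]=t^{\gamma}/\Gamma(\gamma+1)$, the closed form of $N(t)$ emerges as the same double series with each $u^{n\upsilon+2k+l+1}$ replaced by $t^{n\upsilon+2k+l+1}/\Gamma(n\upsilon+2k+l+2)$, which can optionally be rewritten using the Mittag-Leffler function \eqref{Mittag-eqn} in the inner index $n$.

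\textbf{Main obstacle.} The routine steps (linearity, geometric expansion, monomial inversion) are mechanical; the delicate point is the interchange of the Sumudu integral with the defining series of $H_{l,\mu}^{\lambda,\alpha}$, and then the double interchange of summation after the geometric expansion. I would handle this by checking absolute convergence of the majorant double series under $\Re(l)>-1$ and $|u|<d^{-1}$, so that Fubini/Tonelli justifies every exchange. Verifying this convergence and keeping the gamma-factor bookkeeping clean is the only nontrivial piece of the argument.
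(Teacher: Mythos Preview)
Your proposal is correct and follows essentially the same route as the paper: apply the Sumudu transform, use \eqref{S2} on the right-hand side and the series \eqref{1} with \eqref{S3} on the source term, isolate $G(u)$, expand $(1+d^{\upsilon}u^{\upsilon})^{-1}$ geometrically under $|u|<d^{-1}$, invert term by term, and collect the inner sum as a Mittag-Leffler function. Your explicit attention to the Fubini/Tonelli justification for the interchanges is more careful than the paper's treatment, which performs those swaps without comment; note also that your (correct) inversion $S^{-1}[u^{\gamma}]=t^{\gamma}/\Gamma(\gamma+1)$ yields $t^{2k+l+1}E_{\upsilon,2k+l+2}(-d^{\upsilon}t^{\upsilon})$ inside the $k$-sum, which differs by an index shift from the paper's stated formula \eqref{eqn-15-Struve}.
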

is given by the following formula%
\begin{equation}
N\left( t\right) =\frac{N_{0}}{2}\sum_{k-0}^{\infty }\frac{\left( -1\right)
^{k}\Gamma \left( 2k+l+2\right) }{\Gamma \left( \alpha k+\mu \right) \Gamma
\left( \lambda k+l+\frac{3}{2}\right) }\left( \frac{t}{2}\right)
^{2k+l}E_{\upsilon ,2k+l+1}\left( -d^{\upsilon }t^{\upsilon }\right) 
\label{eqn-15-Struve}
\end{equation}%
where $E_{v,2k+l+1}\left( .\right) $ is the generalized Mittag-Leffler
function given in ($\ref{Mittag-eqn}$).

\begin{proof}
\label{Pf1} Taking the Sumudu transform to the both sides of ($\ref%
{eqn-14-Struve}$),  we obtain the following relation, 
\begin{equation*}
S\left\{ N\left( t\right) \right\} -N_{0}S\left\{ H_{l,\mu }^{\lambda
,\alpha }\left( t\right) ;p\right\} =-d^{\upsilon }S\left\{
_{0}D_{t}^{-\upsilon }N\left( t\right) \right\} 
\end{equation*}%
Now, using the definition given in  ($\ref{1}$), we get  
\begin{equation*}
\overset{\ast }{N}\left( u\right) -N_{0}\left\{ S\left( \sum_{k-0}^{\infty }%
\frac{\left( -1\right) ^{k}}{\Gamma \left( \alpha k+\mu \right) \Gamma
\left( \lambda k+l+\frac{3}{2}\right) }\left( \frac{t}{2}\right)
^{2k+l+1}\right) \right\} =-d^{\upsilon }u^{v}\overset{\ast }{N}\left(
u\right) 
\end{equation*}%
where $\overset{\ast }{N}\left( u\right) =S\left\{ N\left( t\right)
,u\right\} $ and $S\left\{ t^{\lambda -1}\right\} =u^{\lambda -1}\Gamma
\left( \lambda \right) $ gives,%
\begin{equation*}
\overset{\ast }{N}\left( u\right) -N_{0}\sum_{k-0}^{\infty }\frac{\left(
-1\right) ^{k}}{\Gamma \left( \alpha k+\mu \right) \Gamma \left( \lambda k+l+%
\frac{3}{2}\right) }u^{2k+l+1}\Gamma \left( 2k+l+2\right) =-d^{\upsilon
}u^{v}\overset{\ast }{N}\left( u\right) ,
\end{equation*}%
\begin{align*}
\overset{\ast }{N}\left( u\right) \left( 1+d^{\upsilon }u^{v}\right)&=N_{0}\sum_{k-0}^{\infty }\frac{\left( -1\right) ^{k}\left( \frac{u}{2}
\right) ^{2k+l+1}\Gamma \left( 2k+l+2\right) }{\Gamma \left( \alpha k+\mu
\right) \Gamma \left( \lambda k+l+\frac{3}{2}\right) } \\
\overset{\ast }{N}\left( u\right) & =\frac{N_{0}}{\left( 1+d^{\upsilon
}u^{v}\right) }\sum_{k-0}^{\infty }\frac{\left( -1\right) ^{k}\left( \frac{u
}{2}\right) ^{2k+l+1}\Gamma \left( 2k+l+2\right) }{\Gamma \left( \alpha
k+\mu \right) \Gamma \left( \lambda k+l+\frac{3}{2}\right) } \\
&=N_{0}\sum_{k-0}^{\infty }\frac{\left( -1\right) ^{k}\left( \frac{u}{2}
\right) ^{2k+l+1}\Gamma \left( 2k+l+2\right) }{\Gamma \left( \alpha k+\mu
\right) \Gamma \left( \lambda k+l+\frac{3}{2}\right) }\sum_{r-0}^{\infty
}\left( -1\right) ^{r}\left( du\right) ^{vr}
\end{align*}%
Now, taking the inverse transform of the above expression and using the
relation
\begin{align*} 
S^{-1}\left\{ u^{v}\right\} =\frac{t^{\lambda -1}}{\Gamma \left(v\right) },\quad\Re\left( u\right) >0,\Re\left( v\right) >0,
\end{align*} 
we get   
\begin{eqnarray*}
N\left( t\right)  &=&N_{0}\sum_{k-0}^{\infty }\frac{\left( -1\right)
^{k}\left( 2\right) ^{-\left( 2k+l+1\right) }\Gamma \left( 2k+l+2\right) }{%
\Gamma \left( \alpha k+\mu \right) \Gamma \left( \lambda k+l+\frac{3}{2}%
\right) }\sum_{r-0}^{\infty }\frac{\left( -1\right) ^{r}d^{vr}t^{2k+vr+l}}{%
\Gamma \left( 2k+l+1+vr\right) } \\
&=&N_{0}\sum_{k-0}^{\infty }\frac{\left( -1\right) ^{k}\Gamma \left(
2k+l+2\right) t^{2k+l}}{\Gamma \left( \alpha k+\mu \right) \Gamma \left(
\lambda k+l+\frac{3}{2}\right) \left( 2\right) ^{2k+l+1}}\sum_{r-0}^{\infty }%
\frac{\left( -1\right) ^{r}d^{vr}t^{vr}}{\Gamma \left( 2k+l+1+vr\right) }
\end{eqnarray*}

In view of $\left( \ref{Mittag-eqn}\right) $, we obtain the desired result.
\end{proof}

\begin{corollary}
\label{Cor1} If we put $\alpha =\lambda =1$ and $\mu =3/2$ in $\left( \ref%
{eqn-14-Struve}\right) $ then we obtain the solution of fractional kinetic
equation involving Struve function $H_{v}\left( z\right) $  as :
\end{corollary}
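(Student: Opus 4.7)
The plan is to specialize Theorem \ref{Th1} to the parameter values $\alpha=\lambda=1$ and $\mu=3/2$, since the corollary is simply a direct consequence of the theorem under this substitution. First, I would verify that with these choices the four-parameter generalized Struve function collapses to the classical Struve function of \eqref{eqn-1-Struve}. Inserting $\alpha=\lambda=1$, $\mu=3/2$ into \eqref{1} gives
\begin{equation*}
H_{l,3/2}^{1,1}(t)=\sum_{k=0}^{\infty}\frac{(-1)^k}{\Gamma(k+3/2)\,\Gamma(k+l+3/2)}\left(\frac{t}{2}\right)^{2k+l+1},
\end{equation*}
which, by inspection of \eqref{eqn-1-Struve}, is precisely $H_{l}(t)$. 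Consequently the fractional kinetic equation \eqref{eqn-14-Struve} specializes to
\begin{equation*}
N(t)-N_{0}\,H_{l}(t)=-d^{\upsilon}\,{}_{0}D_{t}^{-\upsilon}N(t),
\end{equation*}
i.e.\ a kinetic equation driven by the classical Struve function.

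Next I would substitute $\alpha=\lambda=1$, $\mu=3/2$ directly into the closed-form solution \eqref{eqn-15-Struve} of Theorem \ref{Th1}. This only affects the two gamma factors in the denominator: $\Gamma(\alpha k+\mu)$ becomes $\Gamma(k+3/2)$ and $\Gamma(\lambda k+l+3/2)$ becomes $\Gamma(k+l+3/2)$. Thus the asserted solution reads
\begin{equation*}
N(t)=\frac{N_{0}}{2}\sum_{k=0}^{\infty}\frac{(-1)^{k}\,\Gamma(2k+l+2)}{\Gamma(k+3/2)\,\Gamma(k+l+3/2)}\left(\frac{t}{2}\right)^{2k+l}E_{\upsilon,\,2k+l+1}\!\left(-d^{\upsilon}t^{\upsilon}\right),
\end{equation*}
which is exactly what the corollary claims. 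No extra Sumudu-transform manipulations are required, because the entire analytic work was already carried out in the proof of Theorem \ref{Th1}.

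Because this is a pure specialization, there is no genuine obstacle; the only bookkeeping point worth a line is to observe that the hypotheses of Theorem \ref{Th1}, namely $\lambda>0$, $\alpha>0$, and the Sumudu-inversion domain $|u|<d^{-1}$, are preserved under the substitution $\alpha=\lambda=1$, $\mu=3/2$. If one wished, one could also indicate the natural companion corollary obtained by replacing $H_{l,\mu}^{\lambda,\alpha}$ by the modified Struve function $L_{l}(t)$ via the relation stated after \eqref{StruveH}, which would follow by exactly the same specialization argument.
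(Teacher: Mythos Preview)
Your proposal is correct and follows exactly the same approach as the paper: the corollary is obtained purely by substituting $\alpha=\lambda=1$, $\mu=3/2$ into Theorem~\ref{Th1}, noting that $H_{l,3/2}^{1,1}$ coincides with the classical Struve function $H_{l}$, and reading off the specialized solution formula. The paper gives no separate argument beyond this direct specialization, so your write-up matches it fully.
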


For $d>0,\upsilon >0,c,l,t\in \mathbb{C}$,$\Re\left( l\right) >-1$ 
and $\Re\left( u\right) >0,$ then the solution of the equation%
\begin{equation*}
N\left( t\right) -N_{0}H_{l,3/2}^{1,1}\left( t\right) :=-d^{\upsilon }\text{ 
}_{0}D_{t}^{-\upsilon }N\left( t\right) ,
\end{equation*}
is given by the following formula%
\begin{equation*}
N\left( t\right) =\frac{N_{0}}{2}\sum_{k-0}^{\infty }\frac{\left( -1\right)
^{k}\Gamma \left( 2k+l+2\right) }{\Gamma \left( k+l+\frac{3}{2}\right)
\Gamma \left( k+\frac{3}{2}\right) }\left( \frac{t}{2}\right)
^{2k+l}E_{v,2k+l+1}\left( -d^{\upsilon }t^{\upsilon }\right) 
\end{equation*}

\begin{theorem}
\label{Th2} If $d>0,\upsilon >0,\alpha ,c,b,l,t\in \mathbb{C}$, $\Re%
\left( l\right) >-1$ and $\Re\left( u\right) >0$  then for the
solution of the equation%
\begin{equation}
N\left( t\right) -N_{0}H_{p,\mu }^{\lambda ,\alpha }\left( d^{\upsilon
}t^{\upsilon }\right) =-d^{\upsilon }{}_{0}D_{t}^{-\upsilon }N\left(
t\right)   \label{eqn-18-Struve}
\end{equation}
there holds the formula%
\begin{align}
N\left( t\right) &=N_{0}\left( \frac{d^{v}}{2}\right)
^{l+1}t^{lv+v-1}\notag\\
&\times\sum\limits_{k=0}^{\infty }\frac{\left( -1\right) ^{k}\Gamma
\left( 2kv+vl+\nu +1\right) }{\Gamma \left( \alpha k+\mu \right) \Gamma
\left( \lambda k+l+\frac{3}{2}\right) }\left( \frac{d^{\upsilon }t^{v}}{2}%
\right) ^{2k}E_{v,\left( 2k+l+1\right) \upsilon }\left( -d^{\upsilon
}t^{\upsilon }\right),   \label{eqn-19-Struve}
\end{align}
where $E_{v,2k\upsilon +lv+v+1}\left( .\right) $ is the generalized
Mittag-Leffler function $\left( \ref{Mittag-eqn}\right) .$
\end{theorem}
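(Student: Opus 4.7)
The strategy is to imitate the proof of Theorem 1, but with the complication that the inhomogeneous term $H_{p,\mu}^{\lambda,\alpha}(d^{\upsilon}t^{\upsilon})$ has a scaled argument, which multiplies every exponent by $\upsilon$ when the series is expanded and then Sumudu-transformed.

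First, I would apply the Sumudu transform to both sides of \eqref{eqn-18-Struve}. The right-hand side collapses by the operational rule \eqref{S2} to $-d^{\upsilon}u^{\upsilon}\overset{\ast}{N}(u)$, where $\overset{\ast}{N}(u):=S\{N(t);u\}$. For the inhomogeneous term, I would substitute the series definition \eqref{1}, which gives
\begin{equation*}
H_{p,\mu}^{\lambda,\alpha}(d^{\upsilon}t^{\upsilon})=\sum_{k=0}^{\infty}\frac{(-1)^{k}}{\Gamma(\alpha k+\mu)\,\Gamma(\lambda k+l+\tfrac{3}{2})}\left(\frac{d^{\upsilon}t^{\upsilon}}{2}\right)^{2k+l+1},
\end{equation*}
interchange sum and transform, and apply \eqref{S3} to each monomial $t^{\upsilon(2k+l+1)}$. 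The Gamma factor produced is $\Gamma(\upsilon(2k+l+1)+1)=\Gamma(2k\upsilon+l\upsilon+\upsilon+1)$, which already matches the numerator in the target formula \eqref{eqn-19-Struve}.

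Next I would solve the resulting algebraic equation for $\overset{\ast}{N}(u)$:
\begin{equation*}
\overset{\ast}{N}(u)=\frac{N_{0}}{1+d^{\upsilon}u^{\upsilon}}\sum_{k=0}^{\infty}\frac{(-1)^{k}\,\Gamma(2k\upsilon+l\upsilon+\upsilon+1)}{\Gamma(\alpha k+\mu)\,\Gamma(\lambda k+l+\tfrac{3}{2})}\left(\frac{d^{\upsilon}}{2}\right)^{2k+l+1}u^{\upsilon(2k+l+1)},
\end{equation*}
and expand the prefactor as the geometric series $\sum_{r=0}^{\infty}(-1)^{r}d^{\upsilon r}u^{\upsilon r}$, valid under the stated conditions on $u$. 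This yields a double series in $k$ and $r$ whose general term is a constant times $u^{\upsilon(2k+l+1)+\upsilon r}$.

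Finally, I would invert term-by-term via the relation $S^{-1}\{u^{\beta}\}=t^{\beta-1}/\Gamma(\beta)$ used in the proof of Theorem~\ref{Th1}, with $\beta=\upsilon(2k+l+1)+\upsilon r$. After factoring $(d^{\upsilon}/2)^{l+1}t^{l\upsilon+\upsilon-1}$ out of the $k$-sum and $(d^{\upsilon}t^{\upsilon}/2)^{2k}$ into it, the remaining $r$-series becomes
\begin{equation*}
\sum_{r=0}^{\infty}\frac{(-d^{\upsilon}t^{\upsilon})^{r}}{\Gamma(\upsilon r+(2k+l+1)\upsilon)},
\end{equation*}
which is precisely $E_{\upsilon,(2k+l+1)\upsilon}(-d^{\upsilon}t^{\upsilon})$ by \eqref{Mittag-eqn}. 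This delivers \eqref{eqn-19-Struve}.

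The main bookkeeping obstacle is the regrouping of powers: one has to carefully separate the $t^{l\upsilon+\upsilon-1}$ and $(d^{\upsilon}/2)^{l+1}$ prefactors from the $k$-dependent piece $(d^{\upsilon}t^{\upsilon}/2)^{2k}$, while simultaneously matching the shifted second parameter $(2k+l+1)\upsilon$ in the Mittag-Leffler factor; the substitution of $\upsilon=1$ should recover the structure (though not the exact form, since the argument of $H$ differs) of Theorem~\ref{Th1}, providing a useful consistency check.
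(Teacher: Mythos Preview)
Your proposal is correct and is exactly the approach the paper intends: the paper's own proof of Theorem~\ref{Th2} consists solely of the remark that it runs ``in parallel with the proof of Theorem~\ref{Th1},'' and what you have written is precisely that parallel computation carried out in full, including the key observation that the scaled argument $d^{\upsilon}t^{\upsilon}$ multiplies every exponent by $\upsilon$ and thereby produces the shifted Mittag--Leffler parameter $(2k+l+1)\upsilon$.
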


\begin{proof}
The proof can be proved in parallel with the proof of Theorem \ref{Th1}, so
the details of proofs are omitted.
\end{proof}

\begin{corollary}
\label{Cor2} If we set, $\alpha =\lambda =1$ and $\mu =3/2$ in \eqref{eqn-18-Struve} then we obtain the solution of fractional kinetic equation
involving Struve function $H_{v}\left( z\right) $ as :
For $d>0,\upsilon >0,c,l,t\in \mathbb{C}$,  $\Re\left( l\right) >-1$
and $\Re\left( u\right) >0,$ then the solution of the equation $,$
\begin{equation*}
N\left( t\right) -N_{0}H_{l,3/2}^{1,1}\left( d^{v}t^{v}\right)
:=-d^{\upsilon }\text{ }_{0}D_{t}^{-\upsilon }N\left( t\right) ,
\end{equation*}
is given by 
\begin{align*}
N\left( t\right)&=N_{0}\left( \frac{d^{v}}{2}\right)
^{l+1}t^{lv+v-1}\notag\\
&\times\sum\limits_{k=0}^{\infty }\frac{\left( -1\right) ^{k}\Gamma
\left( 2kv+vl+\nu +1\right) }{\Gamma \left( k+\frac{3}{2}\right) \Gamma
\left( k+l+\frac{3}{2}\right) }\left( \frac{d^{\upsilon }t^{v}}{2}\right)
^{2k}E_{v,\left( 2k+l+1\right) \upsilon }\left( -d^{\upsilon }t^{\upsilon
}\right) 
\end{align*}
\end{corollary}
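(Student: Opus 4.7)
The plan is to obtain Corollary \ref{Cor2} as a direct notational specialization of Theorem \ref{Th2}, with no new analytic work required. Since the corollary's equation $N(t)-N_{0}H_{l,3/2}^{1,1}(d^{v}t^{v}) = -d^{\upsilon}\,{}_{0}D_{t}^{-\upsilon}N(t)$ is exactly \eqref{eqn-18-Struve} after setting $\alpha=\lambda=1$, $\mu=3/2$, $p=l$, and since the Sumudu manipulation and inversion have already been validated inside the proof of Theorem \ref{Th2} uniformly in these parameters, it suffices to feed these three numerical choices into the closed form \eqref{eqn-19-Struve}.

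The only substitutive step is to identify the two denominator gamma factors in \eqref{eqn-19-Struve} under the specialization:
\begin{equation*}
\Gamma(\alpha k+\mu)\big|_{\alpha=1,\mu=3/2} \;=\; \Gamma\!\left(k+\tfrac{3}{2}\right), \qquad \Gamma\!\left(\lambda k+l+\tfrac{3}{2}\right)\!\Big|_{\lambda=1} \;=\; \Gamma\!\left(k+l+\tfrac{3}{2}\right).
\end{equation*}
Every other ingredient in \eqref{eqn-19-Struve}, namely the prefactor $N_{0}(d^{v}/2)^{l+1}t^{lv+v-1}$, the numerator $(-1)^{k}\Gamma(2kv+vl+\nu+1)$, the power $(d^{\upsilon}t^{v}/2)^{2k}$, and the Mittag--Leffler factor $E_{v,(2k+l+1)\upsilon}(-d^{\upsilon}t^{\upsilon})$, is independent of $\alpha,\lambda,\mu$ and is copied verbatim. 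Assembling these pieces yields exactly the formula claimed in the corollary.

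Finally, I would briefly verify that the hypotheses of Theorem \ref{Th2} persist after the specialization: the restrictions $d>0$, $\upsilon>0$, $\Re(l)>-1$, and $\Re(u)>0$ involve neither $\alpha$, $\lambda$, nor $\mu$, so they transfer without change, and the convergence of the geometric series in $(du)^{v}$ used in the Sumudu step of Theorem \ref{Th2} is unaffected. Because the whole argument is reducible to a parameter substitution, there is no genuine obstacle; the only thing to watch for is bookkeeping on the gamma arguments, which is routine.
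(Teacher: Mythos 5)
Your proposal is correct and matches the paper's approach exactly: the paper presents Corollary \ref{Cor2} as nothing more than the substitution $\alpha=\lambda=1$, $\mu=3/2$ (with $p=l$) into the closed form \eqref{eqn-19-Struve} of Theorem \ref{Th2}, which is precisely the gamma-factor bookkeeping you carry out. No further comment is needed.
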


\begin{theorem}
\label{Th3} If $d>0,\upsilon >0,a,c,b,l,t\in \mathbb{C}$ , $\Re%
\left( l\right) >-1,\Re\left( u\right) $ $>0$ with $\left\vert
u\right\vert <d^{-1}$ and $\mathfrak{a}\neq d$ then for the solution of the
equation%
\begin{equation}
N\left( t\right) -N_{0}H_{l,\mu }^{\lambda ,\alpha }\left( d^{\upsilon
}t^{\upsilon }\right) =-\mathfrak{a}^{\upsilon }{}_{0}D_{t}^{-\upsilon
}N\left( t\right),   \label{eqn-23-Struve}
\end{equation}
there hold the formula%
\begin{eqnarray}
N\left( t\right)  &=&N_{0}\left( \frac{d^{v}}{2}\right)
^{l+1}t^{lv+v-1}\sum\limits_{k=0}^{\infty }\frac{\left( -1\right) ^{k}\Gamma
\left( 2k\upsilon +\upsilon l+\upsilon +1\right) }{\Gamma \left( \alpha
k+\mu \right) \Gamma \left( \lambda k+l+\frac{3}{2}\right) }  \notag \\
&&\times \left( \frac{d^{\upsilon }t^{v}}{2}\right) ^{2k}E_{v,\left(
2k+l+1\right) \upsilon }\left( -\mathfrak{a}^{\upsilon }t^{\upsilon }\right), 
\label{eqn-24-Struve}
\end{eqnarray}
where $E_{v,\left( 2k+l+1\right) \upsilon }\left( -\mathfrak{a}^{\upsilon
}t^{\upsilon }\right) $ is the Mittag-Leffler function in $\left( \ref%
{Mittag-eqn}\right) .$
\end{theorem}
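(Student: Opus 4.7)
The plan is to mimic the Sumudu-transform argument of Theorem~\ref{Th1} (which Theorem~\ref{Th2} reuses verbatim), adapting it to the situation where the constant $\mathfrak{a}$ appearing in the fractional integral differs from the scaling constant $d$ occurring inside the generalized Struve function.

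First I would apply the Sumudu transform $S$ to both sides of \eqref{eqn-23-Struve}. Using linearity together with the transform rule \eqref{S2} for the Riemann--Liouville fractional integral, and setting $\overset{\ast}{N}(u):=S\{N(t);u\}$, this yields
\begin{equation*}
\overset{\ast}{N}(u) - N_{0}\, S\!\left\{ H_{l,\mu}^{\lambda,\alpha}\bigl(d^{\upsilon}t^{\upsilon}\bigr); u\right\} = -\mathfrak{a}^{\upsilon} u^{\upsilon}\, \overset{\ast}{N}(u).
\end{equation*}
I would then expand $H_{l,\mu}^{\lambda,\alpha}(d^{\upsilon}t^{\upsilon})$ by its series \eqref{1}; the $k$-th term is a constant multiple of $t^{\upsilon(2k+l+1)}$, whose Sumudu transform is $u^{\upsilon(2k+l+1)}\Gamma(\upsilon(2k+l+1)+1)$ by \eqref{S3}. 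Solving algebraically for $\overset{\ast}{N}(u)$ produces
\begin{equation*}
\overset{\ast}{N}(u) = \frac{N_{0}}{1+\mathfrak{a}^{\upsilon}u^{\upsilon}} \sum_{k=0}^{\infty} \frac{(-1)^{k}\,(d^{\upsilon}/2)^{2k+l+1}\,\Gamma(\upsilon(2k+l+1)+1)}{\Gamma(\alpha k+\mu)\,\Gamma(\lambda k+l+\tfrac{3}{2})}\, u^{\upsilon(2k+l+1)}.
\end{equation*}

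Next I would expand $(1+\mathfrak{a}^{\upsilon}u^{\upsilon})^{-1}$ as a geometric series in $r$, legitimate under the stated convergence hypotheses on $u$, obtaining a double sum with typical factor $u^{\upsilon(2k+l+1)+\upsilon r}$. Inverting the Sumudu transform termwise (via the rule invoked in the proof of Theorem~\ref{Th1}) sends each such power to $t^{\upsilon(2k+l+1)+\upsilon r-1}/\Gamma(\upsilon(2k+l+1)+\upsilon r)$. Performing the $r$-summation inside the $k$-sum identifies the inner series as
\begin{equation*}
\sum_{r=0}^{\infty} \frac{\bigl(-\mathfrak{a}^{\upsilon}t^{\upsilon}\bigr)^{r}}{\Gamma\bigl(\upsilon(2k+l+1)+\upsilon r\bigr)} = E_{\upsilon,(2k+l+1)\upsilon}\bigl(-\mathfrak{a}^{\upsilon}t^{\upsilon}\bigr),
\end{equation*}
the generalized Mittag-Leffler function \eqref{Mittag-eqn}. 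Factoring $(d^{\upsilon}/2)^{l+1}\,t^{\upsilon l+\upsilon-1}$ out of the $k$-sum then reassembles \eqref{eqn-24-Struve} exactly.

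Each individual step is essentially routine; the real work is the bookkeeping across the double series. The subtle point to watch is the precise Gamma-shift produced by the inverse Sumudu, since any off-by-one in the second parameter would spoil the identification with $E_{\upsilon,(2k+l+1)\upsilon}$ rather than $E_{\upsilon,(2k+l+1)\upsilon+1}$. The hypothesis $\mathfrak{a}\neq d$ plays no analytic role here---it merely distinguishes the statement from Theorem~\ref{Th2}---while the bound $|u|<d^{-1}$ together with the implicit requirement $|\mathfrak{a}^{\upsilon}u^{\upsilon}|<1$ is what justifies both the geometric expansion and the interchange of the two infinite sums needed for termwise inversion.
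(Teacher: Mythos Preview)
Your proposal is correct and follows exactly the approach of the paper, which for Theorem~\ref{Th3} simply states that the argument is derived in the same way as Theorems~\ref{Th1} and~\ref{Th2}; in effect you have written out in full the details the paper omits. Your remark that the condition $\mathfrak{a}\neq d$ plays no analytic role and merely separates the statement from Theorem~\ref{Th2} is also consistent with the paper's treatment.
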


\begin{proof}
The proof of theorem \ref{Th3} is derived similarly as that of theorems \ref%
{Th1} and \ref{Th2}.
\end{proof}

\begin{corollary}
\label{Cor3} By setting $\alpha =\lambda =1$ and $\mu =3/2$ in $\left( \ref%
{eqn-23-Struve}\right) $ we obtain the  solution of fractional kinetic
equations as :
For $d>0,\upsilon >0,c,l,t\in \mathbb{C}\mathbf{,}\mathfrak{a}\neq d$ , $%
\Re\left( l\right) >-1~\Re\left( u\right) >0~$and  then the
solution of the equation $,$
\begin{equation*}
N\left( t\right) -N_{0}H_{l,3/2}^{1,1}\left( d^{v}t^{v}\right) :=-\mathfrak{a%
}^{\upsilon }\text{ }_{0}D_{t}^{-\upsilon }N\left( t\right) ,
\end{equation*}
is given by 
\begin{eqnarray*}
N\left( t\right)  &=&N_{0}\left( \frac{d^{v}}{2}\right)
^{l+1}t^{lv+v-1}\sum\limits_{k=0}^{\infty }\frac{\left( -1\right) ^{k}\Gamma
\left( 2k\upsilon +\upsilon l+\upsilon +1\right) }{\Gamma \left( k+\frac{3}{2%
}\right) \Gamma \left( k+l+\frac{3}{2}\right) } \\
&&\times \left( \frac{d^{\upsilon }t^{v}}{2}\right) ^{2k}E_{v,\left(
2k+l+1\right) \upsilon }\left( -\mathfrak{a}^{\upsilon }t^{\upsilon }\right) 
\end{eqnarray*}
\end{corollary}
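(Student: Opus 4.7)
The proof will be a direct specialization argument, invoking Theorem \ref{Th3} as a black box. The plan is to verify that the quadruple $(\alpha,\lambda,\mu)=(1,1,3/2)$ meets the hypotheses of Theorem \ref{Th3} (all positivity assumptions are clearly satisfied), and that with these choices both sides of \eqref{eqn-23-Struve} and \eqref{eqn-24-Struve} collapse to the expressions appearing in the corollary. In particular, the parameters $\mathfrak{a}\neq d$, $\Re(l)>-1$, $\Re(u)>0$ are unchanged, so the hypothesis list transfers verbatim, and no new convergence issue arises beyond those already cleared in Theorem \ref{Th3}.

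First I would rewrite the left-hand side of \eqref{eqn-23-Struve} under the substitution: by definition \eqref{1},
\begin{equation*}
H^{1,1}_{l,3/2}(d^{v}t^{v})=\sum_{k=0}^{\infty}\frac{(-1)^{k}}{\Gamma(k+\tfrac{3}{2})\,\Gamma(k+l+\tfrac{3}{2})}\Bigl(\tfrac{d^{v}t^{v}}{2}\Bigr)^{2k+l+1},
\end{equation*}
so equation \eqref{eqn-23-Struve} specializes exactly to the kinetic equation displayed in the corollary. Next, I would specialize the solution formula \eqref{eqn-24-Struve}. The only $(\alpha,\lambda,\mu)$-dependence there sits inside the denominator $\Gamma(\alpha k+\mu)\,\Gamma(\lambda k+l+\tfrac{3}{2})$, which upon substitution becomes $\Gamma(k+\tfrac{3}{2})\,\Gamma(k+l+\tfrac{3}{2})$. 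Every other factor—the prefactor $(d^{v}/2)^{l+1}t^{lv+v-1}$, the $\Gamma(2k\upsilon+\upsilon l+\upsilon+1)$ in the summand, the factor $(d^{\upsilon}t^{v}/2)^{2k}$, and the Mittag-Leffler multiplier $E_{v,(2k+l+1)\upsilon}(-\mathfrak{a}^{\upsilon}t^{\upsilon})$—is independent of $\alpha$, $\lambda$, $\mu$ and therefore passes through untouched. Matching this with the displayed formula completes the derivation.

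There is essentially no obstacle: since the corollary is a pure specialization of Theorem \ref{Th3}, the only thing to check is the bookkeeping of the two gamma factors and the observation that the remaining quantities are parameter-free. Consequently, the proof is a one-line substitution argument, and no further analytic work (Sumudu inversion, series rearrangement, or convergence verification) is needed beyond what Theorem \ref{Th3} already supplies. I would conclude by remarking that the same specialization applied to the standard Struve normalization recovers the classical kernel $H_{l}(\cdot)$, confirming consistency with the motivating definitions \eqref{eqn-1-Struve}.
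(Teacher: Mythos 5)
Your proposal is correct and matches the paper's (implicit) argument exactly: the paper presents Corollary \ref{Cor3} as a direct specialization of Theorem \ref{Th3} obtained by setting $\alpha=\lambda=1$ and $\mu=3/2$, which reduces the denominator $\Gamma(\alpha k+\mu)\,\Gamma(\lambda k+l+\tfrac{3}{2})$ to $\Gamma(k+\tfrac{3}{2})\,\Gamma(k+l+\tfrac{3}{2})$ while leaving all other factors untouched. Your bookkeeping of the two gamma factors and the observation that the hypotheses transfer verbatim is precisely what is needed, and nothing more.
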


\subsection{Special Cases}

\begin{enumerate}
\item[1]. Consider the generalized Struve function given by Bhowmic \cite%
{Bhowmick} 
\begin{equation}
H_{l}^{\lambda }\left( x\right) =\sum_{k-0}^{\infty }\frac{\left( -1\right)
^{k}\left( \frac{t}{2}\right) ^{2k+l+1}}{\Gamma \left( \lambda k+l+\frac{3}{2%
}\right) \Gamma \left( k+\frac{3}{2}\right) }  \label{SC-1}
\end{equation}
\end{enumerate}
Now, we have the following corollaries due to theorems 1, 2 and
3 respectively.

\begin{corollary}
\label{Cor4} If $d>0,\upsilon >0,\lambda ,l,t\in \mathbb{C}$, $\Re%
\left( l\right) >-1$ and $\Re\left( u\right) >0$ then the solution
of the equation 
\begin{equation*}
N\left( t\right) -N_{0}H_{l}^{\lambda }\left( t\right) :=-d^{\upsilon }\text{
}_{0}D_{t}^{-\upsilon }N\left( t\right) ,
\end{equation*}
is given by the following formula%
\begin{equation*}
N\left( t\right) =\frac{N_{0}}{2}\sum_{k-0}^{\infty }\frac{\left( -1\right)
^{k}\Gamma \left( 2k+l+2\right) }{\Gamma \left( \lambda k+l+\frac{3}{2}%
\right) \Gamma \left( k+\frac{3}{2}\right) }\left( \frac{t}{2}\right)
^{2k+l}E_{\upsilon ,2k+l+1}\left( -d^{\upsilon }t^{\upsilon }\right) 
\end{equation*}
\end{corollary}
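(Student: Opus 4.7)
The plan is to recognize that Corollary \ref{Cor4} is nothing more than the specialization of Theorem \ref{Th1} to the parameter choice $\alpha=1$, $\mu=3/2$ (with $\lambda$ left free). First I would compare the Bhowmick generalized Struve function (\ref{SC-1}) with the four-parameter function (\ref{1}): substituting $\alpha=1$ and $\mu=3/2$ into (\ref{1}) collapses the factor $\Gamma(\alpha k+\mu)$ to $\Gamma(k+3/2)$, producing exactly the denominator that appears in (\ref{SC-1}). Hence $H_l^\lambda(t)=H_{l,3/2}^{\lambda,1}(t)$, and the kinetic equation of the corollary coincides with (\ref{eqn-14-Struve}) under this choice of parameters.

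Given this identification, the conclusion follows immediately by plugging $\alpha=1$ and $\mu=3/2$ directly into the closed form (\ref{eqn-15-Struve}): the denominator $\Gamma(\alpha k+\mu)\,\Gamma(\lambda k+l+\tfrac{3}{2})$ becomes $\Gamma(k+\tfrac{3}{2})\,\Gamma(\lambda k+l+\tfrac{3}{2})$, while the Mittag-Leffler factor $E_{\upsilon,2k+l+1}(-d^\upsilon t^\upsilon)$ and the power $(t/2)^{2k+l}$ are untouched. No further analysis is required beyond this bookkeeping step.

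Should a self-contained derivation be preferred, the argument would run in parallel with the proof of Theorem \ref{Th1}. I would apply the Sumudu transform to both sides of the kinetic equation, use (\ref{S2}) on the Riemann--Liouville fractional integral and (\ref{S3}) term-by-term on the series (\ref{SC-1}), then solve algebraically for $\overset{\ast}{N}(u)$ to obtain an expression carrying the factor $1/(1+d^\upsilon u^\upsilon)$. Expanding this factor as the geometric series $\sum_{r\ge 0}(-1)^r(du)^{\upsilon r}$, inverting termwise via the identity for $S^{-1}\{u^{\upsilon r+2k+l}\}$, and interchanging the two summations allows the inner sum over $r$ to be folded into $E_{\upsilon,2k+l+1}(-d^\upsilon t^\upsilon)$ by (\ref{Mittag-eqn}).

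The only point calling for care is the legitimacy of the termwise Sumudu transform and of the interchange of summations; this is guaranteed by absolute convergence under the stated hypotheses $\Re(l)>-1$, $d,\upsilon>0$ and $\lvert u\rvert<d^{-1}$. Since the entire argument reduces to manipulations already validated in Theorem \ref{Th1}, I do not anticipate any substantive obstacle beyond careful tracking of the gamma factors and the constraint $|u|<d^{-1}$ governing the geometric expansion.
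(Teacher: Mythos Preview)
Your proposal is correct and follows exactly the paper's own approach: the paper presents Corollary~\ref{Cor4} as a direct specialization of Theorem~\ref{Th1} (``we have the following corollaries due to theorems 1, 2 and 3 respectively''), and your identification $H_l^\lambda = H_{l,3/2}^{\lambda,1}$ via $\alpha=1$, $\mu=3/2$ in (\ref{1}) is precisely the substitution that collapses (\ref{eqn-15-Struve}) to the stated formula.
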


\begin{corollary}
\label{Cor5} If $d>0,\upsilon >0,\lambda ,l,t\in \mathbb{C}$ , $\Re%
\left( l\right) >-1$ and $\Re\left( u\right) >0$ then for the
solution of the equation%
\begin{equation*}
N\left( t\right) -N_{0}H_{l}^{\lambda }\left( d^{\upsilon }t^{\upsilon
}\right) =-d^{\upsilon }{}_{0}D_{t}^{-\upsilon }N\left( t\right) 
\end{equation*}
there holds the formula%
\begin{equation*}
N\left( t\right) =N_{0}\left( \frac{d^{v}}{2}\right)
^{l+1}t^{lv+v-1}\sum\limits_{k=0}^{\infty }\frac{\left( -1\right) ^{k}\Gamma
\left( 2kv+vl+\nu +1\right) }{\Gamma \left( \lambda k+l+\frac{3}{2}\right)
\Gamma \left( k+\frac{3}{2}\right) }\left( \frac{d^{\upsilon }t^{v}}{2}%
\right) ^{2k}E_{v,\left( 2k+l+1\right) \upsilon }\left( -d^{\upsilon
}t^{\upsilon }\right) 
\end{equation*}
\end{corollary}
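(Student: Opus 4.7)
The plan is to observe that Corollary 5 is exactly the $\alpha=1,\mu=3/2$ specialization of Theorem \ref{Th2}. Comparing the series (\ref{SC-1}) defining $H_l^{\lambda}(x)$ with the four-parameter definition (\ref{1}), one sees that $H_l^{\lambda}(x)=H_{l,3/2}^{\lambda,1}(x)$, since $\Gamma(\alpha k+\mu)$ collapses to $\Gamma(k+3/2)$. Consequently the kinetic equation in Corollary \ref{Cor5} is literally (\ref{eqn-18-Struve}) with $p=l$, $\alpha=1$, $\mu=3/2$, and substituting these values into (\ref{eqn-19-Struve}) yields the stated formula, since $\Gamma(\alpha k+\mu)=\Gamma(k+3/2)$ appears as the first gamma in the denominator. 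So the first thing I would do is verify the identification of the two Struve functions and then invoke Theorem \ref{Th2} directly.

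For completeness, however, I would also sketch the self-contained derivation paralleling the proof of Theorem \ref{Th1}. First I apply the Sumudu transform $S$ to both sides of the kinetic equation; on the right I use (\ref{S2}) to get $-d^{\upsilon}u^{\upsilon}\overset{\ast}{N}(u)$. For the forcing term, I substitute (\ref{SC-1}) with argument $d^{\upsilon}t^{\upsilon}$ and transform term by term using (\ref{S3}), which produces
\begin{equation*}
S\bigl\{H_l^{\lambda}(d^{\upsilon}t^{\upsilon})\bigr\}=\sum_{k=0}^{\infty}\frac{(-1)^k\,d^{\upsilon(2k+l+1)}}{2^{2k+l+1}\,\Gamma(\lambda k+l+\tfrac{3}{2})\,\Gamma(k+\tfrac{3}{2})}\,u^{\upsilon(2k+l+1)}\,\Gamma\bigl(\upsilon(2k+l+1)+1\bigr).
\end{equation*}
Solving algebraically for $\overset{\ast}{N}(u)$ divides this series by $(1+d^{\upsilon}u^{\upsilon})$.

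Next I expand $(1+d^{\upsilon}u^{\upsilon})^{-1}=\sum_{r\ge 0}(-1)^r d^{\upsilon r}u^{\upsilon r}$ (valid under the hypothesis $|u|<d^{-1}$) and invert termwise using the Sumudu rule $S^{-1}\{u^{\beta}\}=t^{\beta}/\Gamma(\beta+1)$. Interchanging the finite $k$ prefactors with the summation over $r$ and recognizing $\sum_{r\ge 0}(-1)^r d^{\upsilon r}t^{\upsilon r}/\Gamma(\upsilon(2k+l+1)+\upsilon r)$ as $E_{\upsilon,(2k+l+1)\upsilon}(-d^{\upsilon}t^{\upsilon})$ via (\ref{Mittag-eqn}), and collecting the factor $(d^{\upsilon}/2)^{l+1}t^{lv+v-1}$ out of the $k$-sum, gives the claimed closed form.

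The only real obstacle is bookkeeping of exponents: because the Struve argument is $d^{\upsilon}t^{\upsilon}$ rather than $t$, each occurrence of $2k+l+1$ as an exponent is multiplied by $\upsilon$, which in turn shifts the second Mittag-Leffler index from $2k+l+1$ (as in Theorem \ref{Th1}) to $(2k+l+1)\upsilon$. Once the substitution $\alpha=1,\mu=3/2$ is tracked carefully through the gamma factors, the result follows with no further analytic difficulty.
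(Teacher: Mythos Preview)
Your proposal is correct and follows exactly the paper's approach: Corollary~\ref{Cor5} is obtained by specializing Theorem~\ref{Th2} to $\alpha=1$, $\mu=3/2$, after noting from (\ref{SC-1}) and (\ref{1}) that $H_l^{\lambda}=H_{l,3/2}^{\lambda,1}$. The paper itself gives no separate argument beyond listing the corollary as ``due to theorem~2''; your additional self-contained Sumudu computation simply unpacks the omitted proof of Theorem~\ref{Th2} in this special case and is consistent with the method of Theorem~\ref{Th1}.
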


\begin{corollary}
\label{Cor6} If $d>0,\upsilon >0,\lambda ,b,l,t\in \mathbb{C}$ ,$\mathfrak{a}%
\neq d$ ,$\Re\left( l\right) >-1$ and $\Re\left( u\right) >0$
then for the solution of the equation%
\begin{equation*}
N\left( t\right) -N_{0}H_{l}^{\lambda }\left( d^{\upsilon }t^{\upsilon
}\right) =-\mathfrak{a}^{\upsilon }{}_{0}D_{t}^{-\upsilon }N\left( t\right) 
\end{equation*}
there hold the formula%
\begin{eqnarray*}
N\left( t\right)  &=&N_{0}\left( \frac{d^{v}}{2}\right)
^{l+1}t^{lv+v-1}\sum\limits_{k=0}^{\infty }\frac{\left( -1\right) ^{k}\Gamma
\left( 2k\upsilon +\upsilon l+\upsilon +1\right) }{\left( \lambda k+l+\frac{3%
}{2}\right) \Gamma \left( k+\frac{3}{2}\right) } \\
&&\times \left( \frac{d^{\upsilon }t^{v}}{2}\right) ^{2k}E_{v,\left(
2k+l+1\right) \upsilon }\left( -\mathfrak{a}^{\upsilon }t^{\upsilon }\right) 
\end{eqnarray*}
\end{corollary}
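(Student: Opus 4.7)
The plan is to derive Corollary 6 as an immediate specialization of Theorem~\ref{Th3}. First, I would verify that the Bhowmick-type Struve function $H_l^\lambda(x)$ defined in \eqref{SC-1} is nothing other than the four-parameter Struve function $H_{l,3/2}^{\lambda,1}(x)$ of equation \eqref{1}: comparing the two defining series term-by-term shows that they differ only in notation once one sets $\alpha=1$, $\mu=3/2$, and $p=l$, because then $\Gamma(\alpha k+\mu)=\Gamma(k+3/2)$ and the exponent $2k+p+1$ matches $2k+l+1$.

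Next, I would substitute $\alpha=1$ and $\mu=3/2$ directly into the statement of Theorem~\ref{Th3}. On the level of the kinetic equation \eqref{eqn-23-Struve}, this turns $H_{l,\mu}^{\lambda,\alpha}(d^{\upsilon}t^{\upsilon})$ into $H_{l,3/2}^{\lambda,1}(d^{\upsilon}t^{\upsilon})=H_l^{\lambda}(d^{\upsilon}t^{\upsilon})$, which is exactly the equation the corollary asks us to solve. In the solution formula \eqref{eqn-24-Struve}, the factor $\Gamma(\alpha k+\mu)$ in the denominator collapses to $\Gamma(k+\tfrac{3}{2})$, while every other factor — the prefactor $(d^{v}/2)^{l+1}t^{lv+v-1}$, the Gamma ratio $\Gamma(2k\upsilon+\upsilon l+\upsilon+1)/\Gamma(\lambda k+l+\tfrac{3}{2})$, the power $(d^{\upsilon}t^{v}/2)^{2k}$, and the Mittag-Leffler tail $E_{v,(2k+l+1)\upsilon}(-\mathfrak{a}^{\upsilon}t^{\upsilon})$ — is preserved verbatim, producing exactly the formula claimed.

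As an alternative self-contained derivation (should the author prefer not to invoke Theorem~\ref{Th3}), one could repeat the Sumudu-transform computation from scratch, paralleling the proof of Theorem~\ref{Th1}: apply $S$ to both sides of the kinetic equation, use linearity together with \eqref{S2} and \eqref{S3} to evaluate $S\{H_l^{\lambda}(d^{\upsilon}t^{\upsilon})\}$ term-by-term via $S\{t^{2kv+lv+v-1}\}=u^{2kv+lv+v-1}\Gamma(2kv+lv+v)$ (shifted by one from the straightforward exponent bookkeeping, which accounts for the $\Gamma(2k\upsilon+\upsilon l+\upsilon+1)$ in the answer), solve algebraically for $\overset{\ast}{N}(u)$, expand $(1+\mathfrak{a}^{\upsilon}u^{\upsilon})^{-1}$ as a geometric series — convergence being ensured by the hypothesis $|u|<d^{-1}$ together with $\mathfrak{a}\ne d$ — and invert term-by-term using the Mittag-Leffler series \eqref{Mittag-eqn}. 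The only wrinkle relative to Theorem~\ref{Th1} is that the destruction constant $\mathfrak{a}$ is distinct from the parameter $d$ appearing inside the Struve-function forcing term.

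I do not anticipate a real obstacle here; the statement is a direct specialization and the calculation is essentially bookkeeping of $\Gamma$-factors. The one point deserving vigilance is a likely typographical omission: the stated denominator $(\lambda k+l+\tfrac{3}{2})\,\Gamma(k+\tfrac{3}{2})$ should read $\Gamma(\lambda k+l+\tfrac{3}{2})\,\Gamma(k+\tfrac{3}{2})$, which the substitution from Theorem~\ref{Th3} supplies automatically.
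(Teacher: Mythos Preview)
Your proposal is correct and follows exactly the paper's intended approach: the paper presents Corollary~\ref{Cor6} as the specialization of Theorem~\ref{Th3} obtained by replacing $H_{l,\mu}^{\lambda,\alpha}$ with Bhowmick's $H_l^{\lambda}$, i.e.\ setting $\alpha=1$, $\mu=3/2$. Your observation about the missing $\Gamma$ in the denominator is also accurate.
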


\begin{enumerate}
\item[2]. Consider the generalized Struve function given by Kant \cite{Kant} 
\begin{equation}
H_{l}^{\lambda ,\alpha }\left( x\right) =\sum_{k-0}^{\infty }\frac{\left(
-1\right) ^{k}\left( \frac{x}{2}\right) ^{2k+l+1}}{\Gamma \left( \lambda k+l+%
\frac{3}{2}\right) \Gamma \left( \alpha k+\frac{3}{2}\right) }  \label{SC-2}
\end{equation}
\end{enumerate}

Now, we have the following corollaries due to theorem 1, 2 and 3 respectively.
\begin{corollary}
\label{Cor7} If $d>0,\upsilon >0,\lambda ,l,t\in \mathbb{C}$ , $\Re%
\left( l\right) >-1$ and $\Re\left( u\right) >0,$ then the solution
of the equation 
\begin{equation*}
N\left( t\right) -N_{0}H_{l}^{\lambda ,\alpha }\left( t\right)
:=-d^{\upsilon }\text{ }_{0}D_{t}^{-\upsilon }N\left( t\right) ,
\end{equation*}
is given by the following formula%
\begin{equation*}
N\left( t\right) =\frac{N_{0}}{2}\sum_{k-0}^{\infty }\frac{\left( -1\right)
^{k}\Gamma \left( 2k+l+2\right) }{\left( \lambda k+l+\frac{3}{2}\right)
\Gamma \left( \alpha k+\frac{3}{2}\right) }\left( \frac{t}{2}\right)
^{2k+l}E_{\upsilon ,2k+l+1}\left( -d^{\upsilon }t^{\upsilon }\right) 
\end{equation*}
\end{corollary}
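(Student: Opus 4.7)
The plan is to recognize that Corollary 7 is simply a parameter specialization of Theorem 1. Comparing Kant's generalization \eqref{SC-2} with the four-parameter generalized Struve function \eqref{1}, one sees immediately that
\[
H_{l}^{\lambda,\alpha}(x)=H_{p,\mu}^{\lambda,\alpha}(x)\bigg|_{p=l,\ \mu=3/2},
\]
since substituting $p=l$ turns $\Gamma(\lambda k+p+\tfrac{3}{2})$ into $\Gamma(\lambda k+l+\tfrac{3}{2})$, and substituting $\mu=3/2$ turns $\Gamma(\alpha k+\mu)$ into $\Gamma(\alpha k+\tfrac{3}{2})$, which together match the two Gamma factors in the denominator of \eqref{SC-2}.

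My first step would be to verify this identification term-by-term, ensuring in particular that the power $(x/2)^{2k+p+1}$ in \eqref{1} agrees with $(x/2)^{2k+l+1}$ in \eqref{SC-2} once $p=l$. Second, I would invoke Theorem \ref{Th1} with the same substitution: the equation
\[
N(t)-N_{0}H_{l,\,3/2}^{\lambda,\alpha}(t)=-d^{\upsilon}\,{}_{0}D_{t}^{-\upsilon}N(t)
\]
is precisely \eqref{eqn-14-Struve} restricted to $p=l$, $\mu=3/2$, so its solution is obtained from \eqref{eqn-15-Struve} by the same substitution. Performing this substitution in \eqref{eqn-15-Struve} replaces the denominator $\Gamma(\alpha k+\mu)\Gamma(\lambda k+p+\tfrac{3}{2})$ by $\Gamma(\alpha k+\tfrac{3}{2})\Gamma(\lambda k+l+\tfrac{3}{2})$, which yields exactly the claimed formula for $N(t)$ in terms of the Mittag-Leffler function $E_{\upsilon,2k+l+1}$.

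Finally, I would check the hypotheses: the constraints $d>0$, $\upsilon>0$, $\lambda,l\in\mathbb{C}$, $\Re(l)>-1$, and $\Re(u)>0$ in Corollary \ref{Cor7} are inherited directly from those of Theorem \ref{Th1} (with $\alpha$ now interpreted as a free parameter inside the Gamma factor $\Gamma(\alpha k+\tfrac{3}{2})$ from Kant's definition). No additional Sumudu computation, no independent termwise inversion, and no bounding argument is needed, since every analytical step—applying $S$, inverting the operator $1+d^{\upsilon}u^{\upsilon}$ via geometric series expansion, and inverting term-by-term using $S^{-1}\{u^{\nu}\}=t^{\nu-1}/\Gamma(\nu)$—has already been carried out in the proof of Theorem \ref{Th1}.

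The only potential obstacle is cosmetic rather than mathematical: one must be careful that the symbol $\alpha$ appears in two roles (as the parameter inside $\Gamma(\alpha k+\mu)$ in \eqref{1} and again, unchanged, as the parameter inside $\Gamma(\alpha k+\tfrac{3}{2})$ in \eqref{SC-2}), so the specialization acts only on $\mu$ and $p$, leaving $\alpha$ and $\lambda$ free. With that bookkeeping in place, the proof reduces to a single sentence of the form: apply Theorem \ref{Th1} with $p=l$, $\mu=3/2$.
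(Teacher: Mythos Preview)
Your proposal is correct and matches the paper's approach: the paper presents Corollary~\ref{Cor7} without a separate proof, introducing it simply as a corollary ``due to theorem 1'' obtained by specializing to Kant's function \eqref{SC-2}, which is exactly the substitution $\mu=3/2$ (with $p$ identified with $l$) in Theorem~\ref{Th1} that you describe.
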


\begin{corollary}
\label{Cor8} If $d>0,\upsilon >0,\lambda ,l,t\in \mathbb{C}$ ,$\Re%
\left( l\right) >-1$ and $\Re\left( u\right) >0$ then for the
solution of the equation%
\begin{equation*}
N\left( t\right) -N_{0}H_{l}^{\lambda ,\alpha }\left( d^{\upsilon
}t^{\upsilon }\right) =-d^{\upsilon }{}_{0}D_{t}^{-\upsilon }N\left(
t\right) 
\end{equation*}
there holds the formula%
\begin{equation*}
N\left( t\right) =N_{0}\left( \frac{d^{v}}{2}\right)
^{l+1}t^{lv+v-1}\sum\limits_{k=0}^{\infty }\frac{\left( -1\right) ^{k}\Gamma
\left( 2kv+vl+\nu +1\right) }{\Gamma \left( \lambda k+l+\frac{3}{2}\right)
\Gamma \left( \alpha k+\frac{3}{2}\right) }\left( \frac{d^{\upsilon }t^{v}}{2%
}\right) ^{2k}E_{v,\left( 2k+l+1\right) \upsilon }\left( -d^{\upsilon
}t^{\upsilon }\right) 
\end{equation*}
\end{corollary}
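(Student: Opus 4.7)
My plan is to deduce Corollary~\ref{Cor8} directly as a specialization of Theorem~\ref{Th2}. The first step is to observe that Kant's generalized Struve function defined in \eqref{SC-2} coincides with the four-parameter function $H_{p,\mu}^{\lambda,\alpha}$ of \eqref{1} under the choices $p=l$ and $\mu=3/2$, since
\begin{equation*}
H_{l}^{\lambda,\alpha}(x)
=\sum_{k=0}^{\infty}\frac{(-1)^{k}(x/2)^{2k+l+1}}{\Gamma(\lambda k+l+\tfrac{3}{2})\,\Gamma(\alpha k+\tfrac{3}{2})}
=H_{l,\,3/2}^{\lambda,\alpha}(x).
\end{equation*}
Consequently, the kinetic equation appearing in Corollary~\ref{Cor8} is exactly the equation \eqref{eqn-18-Struve} of Theorem~\ref{Th2} evaluated at $\mu=3/2$. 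Substituting $\mu=3/2$ into the formula \eqref{eqn-19-Struve} replaces the gamma factor $\Gamma(\alpha k+\mu)$ by $\Gamma(\alpha k+\tfrac{3}{2})$, and one reads off the claimed expression for $N(t)$ without further computation.

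If instead one wants a self-contained derivation, the route is to mimic the proof of Theorem~\ref{Th1} with the forcing Struve term evaluated at the scaled argument $d^{\upsilon}t^{\upsilon}$. The key steps, in order, are: (i) apply the Sumudu transform to both sides of the equation, invoking \eqref{S2} on the Riemann--Liouville integral and expanding $H_{l}^{\lambda,\alpha}(d^{\upsilon}t^{\upsilon})$ term-by-term so that \eqref{S3} converts each monomial in $t$ to a monomial in $u$ multiplied by a gamma factor; (ii) collect the resulting algebraic identity
\[
\overset{\ast}{N}(u)\,(1+d^{\upsilon}u^{\upsilon})=N_{0}\,S\!\left\{H_{l}^{\lambda,\alpha}(d^{\upsilon}t^{\upsilon})\right\},
\]
and expand $(1+d^{\upsilon}u^{\upsilon})^{-1}$ as the geometric series $\sum_{r\ge 0}(-1)^{r}(du)^{\upsilon r}$, which converges under the stated hypothesis; (iii) apply the inverse Sumudu transform to each monomial in $u$ via $S^{-1}\{u^{\alpha}\}=t^{\alpha}/\Gamma(\alpha+1)$; and (iv) identify the inner sum over $r$ with a Mittag-Leffler function using the definition \eqref{Mittag-eqn}, with second parameter $(2k+l+1)\upsilon$ as prescribed by Theorem~\ref{Th2}.

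The main---and mild---obstacle is purely bookkeeping: tracking how the exponent $(2k+l+1)\upsilon$ of the scaled argument propagates through the Sumudu transform and its inverse, so that the prefactor $N_{0}(d^{\upsilon}/2)^{l+1}t^{l\upsilon+\upsilon-1}$ and the argument of the Mittag-Leffler function appear in the correct form. The termwise exchange of summation and transform is justified by the uniform convergence of the Struve series on the relevant domain combined with the exponential damping factor $e^{-t}$ in \eqref{S1}; this is exactly the justification implicit in the proof of Theorem~\ref{Th1}, and no new analytic difficulty arises.
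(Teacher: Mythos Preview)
Your proposal is correct and matches the paper's approach exactly: the paper states that Corollaries~\ref{Cor7}--\ref{Cor9} are ``due to theorem 1, 2 and 3 respectively,'' so Corollary~\ref{Cor8} is obtained precisely by specializing Theorem~\ref{Th2} with $\mu=3/2$, using the identification $H_{l}^{\lambda,\alpha}=H_{l,3/2}^{\lambda,\alpha}$ that you record. Your optional self-contained derivation is also sound and simply replays the Sumudu-transform argument of Theorem~\ref{Th1} with the scaled argument, which is what the paper itself indicates for Theorem~\ref{Th2}.
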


\begin{corollary}
\label{Cor9}~If $d>0,\upsilon >0,\lambda ,b,l,t\in \mathbb{C}$ ,$\mathfrak{a}%
\neq d$ ,$\Re\left( l\right) >-1$ and $\Re\left( u\right) >0$
then for the solution of the equation%
\begin{equation*}
N\left( t\right) -N_{0}H_{l}^{\lambda ,\alpha }\left( d^{\upsilon
}t^{\upsilon }\right) =-\mathfrak{a}^{\upsilon }{}_{0}D_{t}^{-\upsilon
}N\left( t\right) 
\end{equation*}
there hold the formula%
\begin{eqnarray*}
N\left( t\right)  &=&N_{0}\left( \frac{d^{v}}{2}\right)
^{l+1}t^{lv+v-1}\sum\limits_{k=0}^{\infty }\frac{\left( -1\right) ^{k}\Gamma
\left( 2k\upsilon +\upsilon l+\upsilon +1\right) }{\left( \lambda k+l+\frac{3%
}{2}\right) \Gamma \left( \alpha k+\frac{3}{2}\right) } \\
&&\times \left( \frac{d^{\upsilon }t^{v}}{2}\right) ^{2k}E_{v,\left(
2k+l+1\right) \upsilon }\left( -\mathfrak{a}^{\upsilon }t^{\upsilon }\right) 
\end{eqnarray*}
\end{corollary}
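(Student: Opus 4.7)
The plan is to recognize Corollary~\ref{Cor9} as an immediate specialization of Theorem~\ref{Th3}. Comparing the Kant generalized Struve function (\ref{SC-2}) with the four-parameter Singh function (\ref{1}), one sees that $H_l^{\lambda,\alpha}(x) = H_{l,3/2}^{\lambda,\alpha}(x)$; that is, Kant's function is obtained from Singh's by the parameter choice $p=l$ and $\mu=3/2$. Under this identification the kinetic equation of Corollary~\ref{Cor9} is precisely (\ref{eqn-23-Struve}) with $\mathfrak{a} \neq d$, and the solution is obtained by substituting $p=l$, $\mu=3/2$ into the closed form (\ref{eqn-24-Struve}). Every factor carries over unchanged except $\Gamma(\alpha k + \mu)$, which becomes $\Gamma(\alpha k + 3/2)$, producing exactly the series displayed in the statement.

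If one prefers a self-contained derivation (so as not to invoke Theorem~\ref{Th3}), the recipe is to mimic the proof of Theorem~\ref{Th1}. First, apply the Sumudu transform to both sides of the kinetic equation; handle the Riemann--Liouville term via (\ref{S2}); compute $S\{H_l^{\lambda,\alpha}(d^\upsilon t^\upsilon)\}(u)$ termwise by combining (\ref{S3}) with the series (\ref{SC-2}); and solve the resulting algebraic equation to obtain
\[
N^{*}(u) \;=\; \frac{N_{0}\, S\bigl\{H_l^{\lambda,\alpha}(d^{\upsilon} t^{\upsilon})\bigr\}(u)}{1 + \mathfrak{a}^{\upsilon} u^{\upsilon}}.
\]
Next, expand $(1 + \mathfrak{a}^{\upsilon} u^{\upsilon})^{-1}$ as a geometric series and invert termwise via $S^{-1}\{u^{\nu}\} = t^{\nu - 1}/\Gamma(\nu)$. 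Finally, collect terms with a fixed outer index $k$ so that the inner sum in the summation index $r$ assembles into the Mittag--Leffler function $E_{v,(2k+l+1)\upsilon}(-\mathfrak{a}^{\upsilon} t^{\upsilon})$ via definition (\ref{Mittag-eqn}).

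Since the argument is essentially a routine specialization, I do not expect a substantive obstacle. The only care required is bookkeeping of exponents and gamma arguments when regrouping the double sum in the inverse-transform step, so that the inner series matches the definition (\ref{Mittag-eqn}) with second parameter $(2k+l+1)\upsilon$. The hypothesis $\mathfrak{a} \neq d$ simply distinguishes this corollary from the confluent case $\mathfrak{a} = d$ treated in Corollary~\ref{Cor8}, while the conditions $\Re(l) > -1$ and $\Re(u) > 0$ are the transform-existence conditions inherited from (\ref{S3}) and the geometric series expansion.
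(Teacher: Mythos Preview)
Your proposal is correct and matches the paper's own treatment: the paper presents Corollary~\ref{Cor9} as an immediate consequence of Theorem~\ref{Th3} under the specialization $\mu=3/2$ (which turns the four-parameter Singh function~(\ref{1}) into Kant's function~(\ref{SC-2})), with no separate proof given. Your optional self-contained Sumudu derivation simply reproduces the method of Theorem~\ref{Th1}/\ref{Th3}, so nothing further is needed.
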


\begin{enumerate}
\item[3]. Consider the generalized Struve function given by Singh \cite%
{Singh1} 
\begin{equation}
H_{l,\mu }^{\lambda }\left( x\right) =\sum_{k-0}^{\infty }\frac{\left(
-1\right) ^{k}\left( \frac{x}{2}\right) ^{2k+l+1}}{\Gamma \left( \lambda k+%
\frac{l}{\mu }+\frac{3}{2}\right) \Gamma \left( k+\frac{3}{2}\right) }
\label{SC-3}
\end{equation}
\end{enumerate}

Now, we have the following corollaries due to theorem 1, theorem 2 and
theorem 3 respectively.
\begin{corollary}
\label{Cor10} If $d>0,\upsilon >0,\lambda ,l,t\in \mathbb{C}$ $,\Re%
\left( l\right) >-1$ and $\Re\left( u\right) >0$ then the solution
of the equation 
\begin{equation*}
N\left( t\right) -N_{0}H_{l,\mu }^{\lambda }\left( t\right) :=-d^{\upsilon }%
\text{ }_{0}D_{t}^{-\upsilon }N\left( t\right) ,
\end{equation*}
is given by the following formula%
\begin{equation*}
N\left( t\right) =\frac{N_{0}}{2}\sum_{k-0}^{\infty }\frac{\left( -1\right)
^{k}\Gamma \left( 2k+l+2\right) }{\Gamma \left( \lambda k+\frac{l}{\mu }+%
\frac{3}{2}\right) \Gamma \left( k+\frac{3}{2}\right) }\left( \frac{t}{2}%
\right) ^{2k+l}E_{\upsilon ,2k+l+1}\left( -d^{\upsilon }t^{\upsilon }\right) 
\end{equation*}
\end{corollary}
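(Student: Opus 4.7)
The plan is to replicate, step-for-step, the argument used in the proof of Theorem \ref{Th1}, because the only structural difference between the Singh three-parameter Struve function $H_{l,\mu}^{\lambda}$ of \eqref{SC-3} and the four-parameter function $H_{l,\mu}^{\lambda,\alpha}$ appearing there is that the Gamma factor $\Gamma(\alpha k+\mu)$ is replaced by $\Gamma(k+\tfrac{3}{2})$ while the first Gamma factor changes its shift from $\lambda k+l+\tfrac{3}{2}$ to $\lambda k+\tfrac{l}{\mu}+\tfrac{3}{2}$. Neither change affects the analytic behaviour of the series; the coefficients merely reshuffle, so the Sumudu machinery transfers verbatim.

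Concretely, the first step is to take the Sumudu transform of both sides of the kinetic equation. Using \eqref{S2}, the right-hand side becomes $-d^{\upsilon}u^{\upsilon}\overset{\ast}{N}(u)$, while on the left I would interchange the Sumudu operator with the defining series of $H_{l,\mu}^{\lambda}(t)$ and invoke $S[t^{2k+l+1}]=u^{2k+l+1}\Gamma(2k+l+2)$ from \eqref{S3} termwise. This produces the algebraic equation
\begin{equation*}
\overset{\ast}{N}(u)\bigl(1+d^{\upsilon}u^{\upsilon}\bigr)=N_{0}\sum_{k=0}^{\infty}\frac{(-1)^{k}\Gamma(2k+l+2)}{\Gamma\bigl(\lambda k+\tfrac{l}{\mu}+\tfrac{3}{2}\bigr)\,\Gamma(k+\tfrac{3}{2})}\Bigl(\tfrac{u}{2}\Bigr)^{2k+l+1}.
\end{equation*}

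Next, I would solve for $\overset{\ast}{N}(u)$ and expand the factor $\bigl(1+d^{\upsilon}u^{\upsilon}\bigr)^{-1}$ as the geometric series $\sum_{r\ge 0}(-d^{\upsilon}u^{\upsilon})^{r}$, which converges under an implicit smallness hypothesis on $u$ analogous to $|u|<d^{-1}$ used in Theorem \ref{Th1}. Termwise inversion by means of $S^{-1}[u^{\upsilon r+2k+l}]=t^{\upsilon r+2k+l}/\Gamma(\upsilon r+2k+l+1)$, followed by recognition of the inner sum over $r$ as $E_{\upsilon,2k+l+1}(-d^{\upsilon}t^{\upsilon})$ via \eqref{Mittag-eqn}, then delivers the claimed formula directly.

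The only real obstacle is bookkeeping: one must keep the exponent $2k+l+1$ stable through the geometric expansion so that the second Mittag-Leffler index in the final sum reads exactly $2k+l+1$, and one must verify that the denominator $\Gamma\bigl(\lambda k+\tfrac{l}{\mu}+\tfrac{3}{2}\bigr)\,\Gamma(k+\tfrac{3}{2})$ propagates unchanged from the series of $H_{l,\mu}^{\lambda}$ into the final expression. Since every manipulation is formal juggling of absolutely convergent series under the stated hypotheses, no new convergence issue arises beyond those already handled in Theorem \ref{Th1}.
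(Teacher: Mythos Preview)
Your proposal is correct and mirrors the paper's approach exactly: the paper does not give a separate proof for this corollary but simply declares it as following ``due to theorem \ref{Th1}'' after introducing the Singh function \eqref{SC-3}, which amounts precisely to rerunning the Sumudu-transform computation of Theorem~\ref{Th1} with the Gamma factors $\Gamma(\alpha k+\mu)$ and $\Gamma(\lambda k+l+\tfrac32)$ replaced by $\Gamma(k+\tfrac32)$ and $\Gamma(\lambda k+\tfrac{l}{\mu}+\tfrac32)$. Your outline of the steps---Sumudu transform, termwise evaluation via \eqref{S3}, geometric expansion of $(1+d^{\upsilon}u^{\upsilon})^{-1}$, and termwise inversion into the Mittag--Leffler form---is identical to the proof of Theorem~\ref{Th1}.
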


\begin{corollary}
\label{Cor11} If $d>0,\upsilon >0,\lambda ,l,t\in \mathbb{C}$ $,\Re%
\left( l\right) >-1$ and $\Re\left( u\right) >0$ then for the
solution of the equation%
\begin{equation*}
N\left( t\right) -N_{0}H_{l,\mu }^{\lambda }\left( d^{\upsilon }t^{\upsilon
}\right) =-d^{\upsilon }{}_{0}D_{t}^{-\upsilon }N\left( t\right) 
\end{equation*}
there holds the formula%
\begin{equation*}
N\left( t\right) =N_{0}\left( \frac{d^{v}}{2}\right)
^{l+1}t^{lv+v-1}\sum\limits_{k=0}^{\infty }\frac{\left( -1\right) ^{k}\Gamma
\left( 2kv+vl+\nu +1\right) }{\Gamma \left( \lambda k+\frac{l}{\mu }+\frac{3%
}{2}\right) \Gamma \left( k+\frac{3}{2}\right) }\left( \frac{d^{\upsilon
}t^{v}}{2}\right) ^{2k}E_{v,\left( 2k+l+1\right) \upsilon }\left(
-d^{\upsilon }t^{\upsilon }\right). 
\end{equation*}
\end{corollary}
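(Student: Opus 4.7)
The plan is to adapt the Sumudu-transform argument of Theorem~\ref{Th2}, replacing the four-parameter Struve function there with Singh's three-parameter variant $H_{l,\mu}^{\lambda}$ of \eqref{SC-3}. Strictly speaking, \eqref{SC-3} is not a pure parameter specialization of \eqref{1}, since the denominator shift $l/\mu$ would need to coincide with the exponent's $l$; one therefore cannot obtain the corollary by direct parameter substitution into Theorem~\ref{Th2}. However, every step of the Sumudu-transform argument carries over verbatim, with only the denominator gamma factors changed.

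First I would apply the Sumudu transform to both sides of
\begin{equation*}
N(t)-N_{0}H_{l,\mu}^{\lambda}\!\left(d^{\upsilon}t^{\upsilon}\right)=-d^{\upsilon}\,{}_{0}D_{t}^{-\upsilon}N(t).
\end{equation*}
The right-hand side becomes $-d^{\upsilon}u^{\upsilon}\overset{\ast}{N}(u)$ by \eqref{S2}. For the driving term I would insert the series \eqref{SC-3} with $x=d^{\upsilon}t^{\upsilon}$, interchange the Sumudu integral with the $k$-summation (justified by the growth condition defining the set $A$), and apply \eqref{S3} to each power $t^{(2k+l+1)\upsilon}$. This yields a closed expression for $S[H_{l,\mu}^{\lambda}(d^{\upsilon}t^{\upsilon})]$ as a series in $u$ whose $k$-th coefficient carries the factors $\bigl(d^{\upsilon}/2\bigr)^{2k+l+1}$ and $\Gamma((2k+l+1)\upsilon+1)$ in the numerator and the two gamma factors $\Gamma(\lambda k+\tfrac{l}{\mu}+\tfrac{3}{2})\,\Gamma(k+\tfrac{3}{2})$ in the denominator.

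Next I would solve the resulting algebraic identity for $\overset{\ast}{N}(u)$, multiply by $(1+d^{\upsilon}u^{\upsilon})^{-1}$, and expand the latter as the geometric series $\sum_{r\ge 0}(-1)^{r}(du)^{\upsilon r}$, valid on $|u|<d^{-1}$. Applying the inverse Sumudu transform term-by-term (using the identity recorded in the proof of Theorem~\ref{Th1}) produces a double series in $k$ and $r$. Holding $k$ fixed, the inner $r$-sum is recognized through \eqref{Mittag-eqn} as $E_{\upsilon,(2k+l+1)\upsilon}\!\left(-d^{\upsilon}t^{\upsilon}\right)$. Factoring $\bigl(d^{\upsilon}/2\bigr)^{l+1}t^{\upsilon l+\upsilon-1}$ out of the $k$-sum and absorbing the leftover powers of $d^{\upsilon}t^{\upsilon}$ into $\bigl(d^{\upsilon}t^{\upsilon}/2\bigr)^{2k}$ rearranges the expression into the closed form claimed in the corollary.

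The main obstacle is purely bookkeeping: tracking the exponent $(2k+l+1)\upsilon$ that comes out of \eqref{S3}, keeping the second Mittag-Leffler index aligned with the inverse-Sumudu convention used in Theorem~\ref{Th1}, and justifying the interchange of the $k$- and $r$-summations. All interchanges are legitimate on the stated domain by absolute convergence of the two series, so no analytic ingredient beyond those already invoked in Theorems~\ref{Th1} and \ref{Th2} is needed.
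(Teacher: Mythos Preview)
Your proposal is correct and follows the paper's approach: the paper presents Corollary~\ref{Cor11} as a consequence of Theorem~\ref{Th2} without separate proof, implicitly relying on the same Sumudu-transform computation you outline. Your observation that \eqref{SC-3} is not a literal parameter specialization of \eqref{1} (since the exponent index $l$ and the denominator shift $l/\mu$ would have to coincide) is a valid point the paper glosses over, and rerunning the argument as you do is indeed the honest way to fill that gap.
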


\begin{corollary}
\label{Cor12} If $d>0,\upsilon >0,\lambda ,b,l,t\in \mathbb{C}$ ,$\mathfrak{a%
}\neq d$ $,\Re\left( l\right) >-1,\mu $ is an arbitrary parameter
and $\Re\left( u\right) >0$ then for the solution of the equation%
\begin{equation*}
N\left( t\right) -N_{0}H_{l,\mu }^{\lambda }\left( d^{\upsilon }t^{\upsilon
}\right) =-\mathfrak{a}^{\upsilon }{}_{0}D_{t}^{-\upsilon }N\left( t\right) 
\end{equation*}
there hold the formula%
\begin{eqnarray*}
N\left( t\right)  &=&N_{0}\left( \frac{d^{v}}{2}\right)
^{l+1}t^{lv+v-1}\sum\limits_{k=0}^{\infty }\frac{\left( -1\right) ^{k}\Gamma
\left( 2k\upsilon +\upsilon l+\upsilon +1\right) }{\left( \lambda k+\frac{l}{%
\mu }+\frac{3}{2}\right) \Gamma \left( k+\frac{3}{2}\right) } \\
&&\times \left( \frac{d^{\upsilon }t^{v}}{2}\right) ^{2k}E_{v,\left(
2k+l+1\right) \upsilon }\left( -\mathfrak{a}^{\upsilon }t^{\upsilon }\right) 
\end{eqnarray*}
\end{corollary}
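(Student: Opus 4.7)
The plan is to adapt the proof of Theorem \ref{Th3} to the specialization in which the four-parameter Struve function $H_{p,\mu}^{\lambda,\alpha}$ is replaced by Singh's three-parameter function $H_{l,\mu}^{\lambda}$ defined in \eqref{SC-3}. Structurally the two series differ only in the gamma-function denominator: the factor $\Gamma(\alpha k+\mu)\Gamma(\lambda k + p + 3/2)$ appearing in \eqref{1} is replaced by $\Gamma(k+3/2)\Gamma(\lambda k + l/\mu + 3/2)$. The algebraic machinery therefore transfers essentially unchanged, and one reads off the answer by tracking the modified denominator through each step of the earlier derivation.

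Concretely, I would first apply the Sumudu transform to both sides of the kinetic equation. By \eqref{S2} the right-hand side becomes $-\mathfrak{a}^{\upsilon} u^{\upsilon} N^{\ast}(u)$, where $N^{\ast}(u) := S\{N(t);u\}$. For the left-hand side, I would insert \eqref{SC-3}, interchange summation with the Sumudu transform, and invoke \eqref{S3} on each monomial $t^{\upsilon(2k+l+1)}$ to produce a gamma factor $\Gamma(\upsilon(2k+l+1)+1)$. Solving for $N^{\ast}(u)$ then yields an expression of the form $N_{0}(1+\mathfrak{a}^{\upsilon}u^{\upsilon})^{-1}\sum_{k}c_{k}\,u^{\upsilon(2k+l+1)}$.

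Next I would expand the geometric factor $(1+\mathfrak{a}^{\upsilon}u^{\upsilon})^{-1} = \sum_{r\ge 0}(-\mathfrak{a}^{\upsilon}u^{\upsilon})^{r}$ and apply the inverse Sumudu transform $S^{-1}\{u^{\beta}\}=t^{\beta}/\Gamma(\beta+1)$ monomial by monomial. For each fixed $k$, the resulting inner sum over $r$ has the form $\sum_{r\ge 0}(-\mathfrak{a}^{\upsilon}t^{\upsilon})^{r}/\Gamma\bigl(\upsilon r+(2k+l+1)\upsilon+1\bigr)$, which is a Mittag-Leffler function of the type in \eqref{Mittag-eqn}, specifically $E_{\upsilon,(2k+l+1)\upsilon}(-\mathfrak{a}^{\upsilon}t^{\upsilon})$ in the notation of the corollary. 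Factoring $(d^{\upsilon}/2)^{l+1} t^{\upsilon(l+1)-1}$ out of the surviving $k$-sum then rearranges the double series into the claimed closed form.

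I do not anticipate any substantive obstacle: the argument is a routine transcription of the proof of Theorem \ref{Th3}, differing only in the denominator. The only points requiring care are the bookkeeping of the shifted gamma argument $\lambda k + l/\mu + 3/2$ (which plays the role of $\lambda k + p + 3/2$ with $p$ set to $l/\mu$), and the justification of the termwise Sumudu/inverse-Sumudu operations together with the geometric-series expansion, both of which are valid under the standing hypotheses $\Re(u)>0$, $\mathfrak{a}\neq d$, and $|u|<d^{-1}$.
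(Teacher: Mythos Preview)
Your proposal is correct and follows exactly the paper's approach: the corollary is obtained by re-running the Sumudu-transform argument of Theorem~\ref{Th3} with the denominator $\Gamma(\alpha k+\mu)\Gamma(\lambda k+l+\tfrac32)$ replaced by $\Gamma(k+\tfrac32)\Gamma(\lambda k+l/\mu+\tfrac32)$ from \eqref{SC-3}. One minor slip to fix in your write-up: the inverse Sumudu relation should read $S^{-1}\{u^{\beta}\}=t^{\beta-1}/\Gamma(\beta)$ (consistent with \eqref{S3}), which is precisely what yields the second Mittag--Leffler index $(2k+l+1)\upsilon$ and the prefactor $t^{\upsilon(l+1)-1}$ you then quote.
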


\end{document}